\theoremstyle{plain}
\newtheorem{theorem}{Theorem}
\newtheorem{lemma}[theorem]{Lemma}
\newtheorem{corollary}[theorem]{Corollary}
\newtheorem{proposition}[theorem]{Proposition}
\theoremstyle{definition}
\newtheorem{definition}[theorem]{Definition}
\newtheorem{conjecture}[theorem]{Conjecture}
\newtheorem{question}[theorem]{Question}
\theoremstyle{remark}
\newtheorem{remark}[theorem]{Remark}
\title{Twist polynomials of delta-matroids}
\author{Qi Yan\\
\small School of Mathematics\\[-0.8ex]
\small China University of Mining and Technology\\[-0.8ex]
\small P. R. China\\
Xian'an Jin\footnote{Corresponding author.}\\
\small School of Mathematical Sciences\\[-0.8ex]
\small Xiamen University\\[-0.8ex]
\small P. R. China\\
\small{\tt Email:qiyan@cumt.edu.cn; xajin@xmu.edu.cn}
}
\date{}
\begin{document}
\begin{abstract}
Recently, Gross, Mansour and Tucker introduced the partial duality polynomial
of a ribbon graph and posed a conjecture that there is no orientable ribbon graph
whose partial duality polynomial has only one non-constant term. We found an infinite family of counterexamples for the conjecture and showed that essentially these are the only counterexamples. This is also obtained independently by Chumutov and Vignes-Tourneret and they posed a problem: it would be interesting to know whether the partial duality polynomial and the related conjectures would make sence for general delta-matroids. In this paper, we show that partial duality polynomials have delta-matroid analogues. We introduce the twist polynomials of delta-matroids and discuss its basic properties for delta-matroids. We give a characterization of even normal binary delta-matroids whose twist polynomials have only one term and then prove that the twist polynomial of a normal binary delta-matroid contains non-zero constant term if and only if its intersection graph is bipartite.
\end{abstract}
\begin{keyword}
delta-matroid\sep binary\sep twist\sep polynomial
\vskip0.2cm
\MSC [2020] 05B35\sep 05C10\sep 05C31
\end{keyword}
\maketitle

\section{Introduction}
For any ribbon graph $G$, there is a natural dual ribbon graph $G^{*}$, also called {\it geometric dual}.
Chmutov \cite{CG} introduced an extension of geometric duality called {\it partial duality}. Roughly
speaking, a partial dual $G^{A}$ is obtained by forming the geometric dual with respect to only a subset $A\subseteq E(G)$ of a ribbon graph $G$.

In \cite{GMT}, Gross, Mansour and Tucker introduced the enumeration of the partial duals $G^A$ of a ribbon graph $G$, by Euler genus $\varepsilon$, over all edge subsets $A\subseteq E(G)$. The associated generating functions, denoted as $^{\partial}\varepsilon_{G}(z)$, are called \emph{partial duality polynomials} of $G$. They formulated the following conjecture.

\begin{conjecture}\label{con1}\cite{GMT}
 There is no orientable ribbon graph having a non-constant partial duality polynomial with only one non-zero coefficient.
\end{conjecture}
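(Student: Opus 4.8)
The plan is to argue by contradiction, reducing the global assertion to a local analysis along the Boolean lattice of edge subsets. Suppose $G$ is a connected orientable ribbon graph whose partial duality polynomial is non-constant with a single non-zero coefficient, say $^{\partial}\varepsilon_{G}(z)=a\,z^{k}$ with $a>0$ and $k\geq 1$. By the definition of $^{\partial}\varepsilon_{G}(z)$ as the generating function enumerating the partial duals $G^{A}$ by Euler genus, this forces $\varepsilon(G^{A})=k>0$ for every $A\subseteq E(G)$. In particular no partial dual is plane, $G$ itself has positive genus, and the geometric dual $G^{E}=G^{*}$ also has Euler genus $k$. The goal is to show that such uniform positivity of the Euler genus across all $2^{|E(G)|}$ subsets cannot occur.

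First I would set up the local engine via the composition law $(G^{A})^{B}=G^{A\triangle B}$. Two subsets $A$ and $A\cup\{e\}$ that differ in a single edge satisfy $G^{A\cup\{e\}}=(G^{A})^{\{e\}}$, so the entire lattice of partial duals is connected by single-edge partial dualities, and it suffices to understand the transition $\delta_{e}(A):=\varepsilon\big((G^{A})^{\{e\}}\big)-\varepsilon(G^{A})$. Writing $H:=G^{A}$ and using $\varepsilon(H)=2-v(H)+|E(H)|-f(H)$ for connected $H$ (where $v$ and $f$ count vertices and boundary components, and partial duality preserves both connectivity and the number of edges), the transition reduces to the change in $v$ and $f$. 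The standard single-edge case analysis — whether $e$ is a bridge, an ordinary edge, a trivial orientable loop, or a linked loop of $H$ — shows that $v$ changes by at most one and $f$ by at most two, so $\delta_{e}(A)$ lies in a small bounded set. The hypothesis that $^{\partial}\varepsilon_{G}(z)$ has a single term forces $\delta_{e}(A)=0$ for \emph{every} $A$ and every edge $e$, and I would try to contradict this total rigidity.

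Next I would attempt to harness orientability. The natural hope is a parity obstruction: an orientable ribbon graph has even Euler genus, so $k=\varepsilon(G)$ is even, and one would like the orientability type to toggle somewhere along a chain of single-edge moves while the Euler genus is pinned at the even value $k$. However, this is exactly where the approach becomes delicate, because a non-orientable ribbon graph may also have even Euler genus, so a change from orientable to non-orientable need not alter $\varepsilon$ at all. A successful proof would therefore require a finer invariant than parity — one sensitive enough to separate orientable and non-orientable partial duals of equal Euler genus — and would have to show that moving $G$ through its lattice of duals must at some step cross a value of this invariant, forcing $\varepsilon$ to change.

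The main obstacle is precisely the possibility of a \emph{globally genus-neutral} configuration. Single-edge partial duality can legitimately preserve the Euler genus: for a bouquet of loops at a single vertex whose interlacement (chord-diagram) pattern is arranged so that each local dualization trades boundary components against vertices in a balancing way, every $\delta_{e}(A)$ can vanish without the graph being plane. Thus the difficulty is not local but global — one must exclude ribbon graphs in which every edge is genus-neutral in every partial dual simultaneously. Controlling this reduces to a combinatorial analysis of the interlacement structure of the underlying loop bouquet, and it is here that I expect the argument to be hardest; indeed, any orientable ribbon graph realizing such a rigid interlacement pattern would itself be an obstruction to the conjecture, so this step is where the entire statement stands or falls.
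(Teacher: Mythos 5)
There is a fundamental problem here that no amount of repair can fix: the statement you are trying to prove is \emph{false}, and the paper's treatment of it is a refutation, not a proof. The paper states explicitly that the conjecture of Gross, Mansour and Tucker does not hold, citing an infinite family of counterexamples from \cite{QYXJ} (reproduced in the paper as Lemma \ref{lem4}): the bouquet $B_{t}$ with signed rotation $(1,2,\ldots,t,1,2,\ldots,t)$, i.e.\ $t$ pairwise-interlaced orientable loops at a single vertex, satisfies $^{\partial}\varepsilon_{B_{t}}(z)=2^{t}z^{t-1}$ when $t$ is odd. For odd $t\geq 3$ this is a non-constant polynomial with a single non-zero coefficient, and $B_{t}$ is orientable since all its loops are orientable. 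So every proof strategy for the conjecture, including yours, must break somewhere; a contradiction can never be reached.

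What is striking is that your own final paragraph pinpoints exactly where your plan collapses, but draws the wrong conclusion from it. You correctly isolate the danger: a ``globally genus-neutral'' bouquet in which every single-edge partial duality preserves Euler genus in every partial dual simultaneously, and you say the proof must exclude such configurations. They cannot be excluded, because they exist --- they are precisely the bouquets $B_{t}$ above with complete interlacement graph $K_{t}$ of odd order $t$. The parity obstruction you hoped for also cannot work: $\varepsilon(B_{t}^{A})=t-1$ is even for all $A$, consistent with every partial dual of an orientable ribbon graph being orientable, so nothing toggles. Indeed, the main results of this paper (Proposition \ref{pro 1} and Theorem \ref{main-3}, in delta-matroid language) show that complete intersection graphs of odd order are exactly the situations where the twist polynomial degenerates to a single term; in other words, the configurations you flagged as the place ``where the entire statement stands or falls'' are not a technical obstacle to be overcome but the actual counterexamples, and essentially the only ones.
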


The conjecture is not true. In \cite{QYXJ} we found an infinite family of counterexamples. Furthermore, we  \cite{QYXJ2} proved that essentially these are the only counterexamples. This is also obtained independently by Chumutov and Vignes-Tourneret in \cite{SCFV} and they also posed the following question:

\begin{question}\cite{SCFV}
Ribbon graphs may be considered from the point of view of delta-matroid. In this way the concepts of partial duality and genus can be interpreted in terms of delta-matroids \cite{CISR, CMNR}. It would be interesting to know whether the partial duality polynomial and the related conjectures would make sence for general delta-matroids.
\end{question}

In this paper, we show that partial duality polynomials have delta-matroid analogues. We introduce the twist polynomials of delta-matroids and discuss its basic properties and consider Conjecture \ref{con1} for delta-matroids. We give a characterization of even normal binary delta-matroids whose twist polynomials  have only one term and then prove that the twist polynomial of a normal binary delta-matroid contains non-zero constant term if and only if its intersection graph is bipartite.



\section{Preliminaries}

\subsection{Delta-matroids}


 A \emph{set system} is a pair $D=(E, \mathcal{F})$, where $E$ or $E(D)$, is a finite set,  called the \emph{ground set}, and $\mathcal{F}$, or $\mathcal{F}(D)$, is a collection of  subsets of $E$, called \emph{feasible sets}.  A set system $D$ is \emph{proper} if
$\mathcal{F}\neq \emptyset$ and $D$ is \emph{trivial} if $E=\emptyset$.

As introduced by Bouchet in \cite{AB1}, a \emph{delta-matroid} is a proper set system $D=(E, \mathcal{F})$ for which  satisfies the \emph{symmetric exchange axiom}: for all triples $(X, Y, u)$ with $X, Y \in \mathcal{F}$ and $u\in X\Delta Y$, there is a $v\in X\Delta Y$ (possibly  $v=u$ ) such that $X\Delta \{u, v\}\in \mathcal{F}$. Here $X\Delta Y=(X\cup Y)\backslash (X\cap Y)$  is the usual symmetric difference of sets.

Let $D=(E, \mathcal{F})$ be a delta-matroid. If for any $A_{1}, A_{2}\in \mathcal{F}$, we have $|A_{1}|=|A_{2}|$. Then $D$ is said to be a \emph{matroid} and we refer to $\mathcal{F}$ as its \emph{bases}. If a set system forms a matroid $M$, then we usually denote $M$ by $(E, \mathcal{B})$. The \emph{rank function} of $M$ takes any subset $A\subseteq E(M)$ to the number
$$\max\{|A\cap B|: B\in \mathcal{B}\}.$$
This is written as $r_{M}(A)$. We say that the \emph{rank} of $M$, written $r(M)$, is equal to $|B|$ for any $B\in\mathcal{B}(M)$. It is clear that the rank function of a matroid $M$ on a set $E$ has the following properties \cite{JO}:
\begin{enumerate}
  \item If $X\subseteq Y\subseteq E$, then $r_{M}(X)\leq r_{M}(Y)$;
  \item If $X$ and $Y$ are subsets of $E$, then $$r_{M}(X\cup Y)+r_{M}(X\cap Y)\leq r_{M}(X)+r_{M}(Y).$$
\end{enumerate}
The \emph{nullity} of $A$, written $n_{M}(A)$,  is $|A|-r_M(A)$.

Bouchet \cite{AB4} defined an analogue of the rank function for delta-matroids. Let $D=(E, \mathcal{F})$ be a delta-matroid. For $A\subseteq E$, define $$\rho_{D}(A):=|E|-min\{|A\Delta F|:  F\in \mathcal{F}\}.$$

A delta-matroid is \emph{even} if for every pair $F$ and $\widetilde{F}$ of its feasible sets $|F\Delta\widetilde{F}|$ is even.
Otherwise, we call the delta-matroid \emph{odd}. A delta-matroid is \emph{normal} if the empty set is feasible.

For a delta-matroid $D=(E, \mathcal{F})$, let $\mathcal{F}_{max}(D)$ and $\mathcal{F}_{min}(D)$ be the collection of maximum and minimum cardinality feasible sets of $D$, respectively. Bouchet \cite{AB2} showed that the set systems $D_{max}=(E, \mathcal{F}_{max})$ and $D_{min}=(E, \mathcal{F}_{min})$ are matroids. The  \emph{width} of $D$, denote by $w(D)$, is defined by $$w(D):=r(D_{max})-r(D_{min}).$$
Particularly, $D$ is a matroid if and only if $w(D)=0$.

A fundamental operation on delta-matroids, introduced by Bouchet in \cite{AB1}, is the twist. Let $D=(E, \mathcal{F})$ be a set system. For $A\subseteq E$, the \emph{twist} of $D$ with respect to $A$, denoted by $D*A$, is given by $$(E, \{A\Delta X: X\in \mathcal{F}\}).$$ The \emph{dual} of $D$, written $D^{*}$, is equal to $D*E$. Using the identity $$(A\Delta C)\Delta(B\Delta C)=A\Delta B,$$ it is straightforward to show that the twist of a delta-matroid is a delta-matroid \cite{AB1}. Note that being even is preserved under taking twists.

\begin{definition}
The  \emph{twist polynomial} of any delta-matroid $D=(E, \mathcal{F})$ is the generating function
$$^{\partial}w_{D}(z):=\sum_{A\subseteq E}z^{w(D*A)}$$
that enumerates all twists of $D$ by width.
\end{definition}

\begin{definition}\cite{CMNR}
For delta-matroids $D=(E, \mathcal{F})$ and $\widetilde{D}=(\widetilde{E}, \widetilde{\mathcal{F}})$ with $E\cap \widetilde{E}=\emptyset$, the  \emph{direct sum} of $D$ and $\widetilde{D}$, written $D\oplus \widetilde{D}$, is the delta-matroid defined as $$D\oplus \widetilde{D}:=(E\cup \widetilde{E}, \{F\cup \widetilde{F}: F\in \mathcal{F}~\text{and}~\widetilde{F}\in \widetilde{\mathcal{F}}\}).$$
\end{definition}

A delta-matroid is  \emph{disconnected} if it can be written as $D\oplus \widetilde{D}$ for some non-trivial delta-matroids $D$ and $\widetilde{D}$, and \emph{connected} otherwise.

Let $D=(E, \mathcal{F})$ be a proper set system. An element $e\in E$ contained in every feasible set of $D$ is said to be a \emph{coloop}, while an element $e\in E$ contained in no feasible set of $D$ is said to be a \emph{loop}.

Let $D=(E, \mathcal{F})$ be a proper set system and $e\in E$. Then $D$ \emph{delete} by $e$, denoted $D\backslash e$, is defined as $D\backslash e:=(E\backslash e, \mathcal{F}')$, where
\[\mathcal{F}':=\left\{\begin{array}{ll}
\{F: F\in \mathcal{F}, F\subseteq E\backslash e\}, & \text{if $e$ is not a coloop}\\

\{F\backslash e: F\in \mathcal{F}\}, & \text{if $e$ is a coloop}.
\end{array}\right.\]
Bouchet \cite{AB1} has shown that the order in which deletions are performed does not matter.
Let $A\subseteq E$. We define $D\setminus A$ as the result of deleting every element of $A$ in any order. The \emph{restriction} of $D$ to $A$, written $D|_{A}$, is the delta-matroid $D\setminus (E\backslash A)$. Throughout the paper, we will often omit the set brackets in the case of a single element set. For example, we write $D*e$ instead of $D*\{e\}$, or $D|_{e}$ instead of $D|_{\{e\}}$.

\subsection{Ribbon graphs}
We give a brief review of ribbon graphs referring the reader to \cite{EM1,EM} for further details.

\begin{definition}[\cite{EM}]
A \emph{ribbon graph} $G=(V(G), E(G))$ is a (possibly non-orientable) surface with boundary
represented as the union of two sets of discs, a set $V(G)$ of vertices, and a set $E(G)$ of edges
such that
\begin{enumerate}
\item The vertices and edges intersect in disjoint line segments;
\item Each such line segment lies on the boundary of precisely one vertex and precisely one edge;
\item Every edge contains exactly two such line segments.
\end{enumerate}
\end{definition}

A bouquet is a ribbon graph with only one vertex. An edge $e$ of a ribbon graph is a \emph{loop} if it is incident with exactly one vertex. A loop is \emph{non-orientable} if together with its incident vertex it forms a M\"{o}bius band, and is \emph{orientable} otherwise.
A \emph{signed rotation} of a bouquet is a cyclic ordering of the half-edges at the vertex and if the edge is an orientable loop, then we give the same sign $+$ to the corresponding two half-edges, and give the different signs (one $+$, the other $-$) otherwise. The sign $+$ is always omitted. See Figure 1 for an example.
\begin{figure}[!htbp]
\begin{center}
\includegraphics[width=9cm]{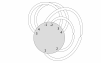}
\caption{The signed rotation of the bouquet is $(-1, -2, 3, 4, 2, 1, 3, 4)$.}
\label{f01}
\end{center}
\end{figure}

\begin{definition}\cite{CMNR}
Let $G=(V, E)$ be a ribbon graph and let $$\mathcal{F}:=\{F\subseteq E(G): \text{$F$ is the edge set of a spanning quasi-tree of $G$}\}.$$ We call $D(G)=:(E, \mathcal{F})$ the delta-matroid of $G$.
\end{definition}


\subsection{Binary and intersection graphs}

For a finite set $E$, let $C$ be a symmetric $|E|$ by $|E|$ matrix over $GF(2)$, with rows and columns indexed, in the same order, by the elements of $E$. Let $C[A]$ be the principal submatrix of $C$ induced by the set $A\subseteq E$. We define the set system $D(C)=(E, \mathcal{F})$ with $$\mathcal{F}:=\{A\subseteq E: C[A] \mbox{ is non-singular}\}.$$  By convention $C[\emptyset]$ is non-singular. Then $D(C)$ is a delta-matroid \cite{AB4}. A delta-matroid is said to be \emph{binary} if it has a twist that is isomorphic to $D(C)$ for some symmetric matrix $C$ over $GF(2)$.

Let $D=(E, \mathcal{F})$ be a delta-matroid. If $D=D(C)$ for some symmetric matrix $C$ over $GF(2)$, that is, $D$ is a normal binary delta-matroid, then we can get $C$ as following \cite{Moff}:
\begin{enumerate}
\item Set $C_{v, v}=1$ if and only if $\{v\}\in \mathcal{F}$. This determines the diagonal entries of $C$;
\item Set $C_{u,v}=1$ if and only if $\{u\}, \{v\}\in \mathcal{F}$ but $\{u, v\}\notin \mathcal{F}$, or $\{u, v\}\in \mathcal{F}$ but $\{u\}$ and $\{v\}$ are not both in $\mathcal{F}$. Then the feasible sets of size two determine the off-diagonal entries of $C$.
\end{enumerate}

Let $D=(E, \mathcal{F})$ be a normal binary delta-matroid. Then there exists a symmetric $|E|$ by $|E|$ matrix $C$ over $GF(2)$ such that $D=D(C)$. The \emph{intersection graph} $G_{D}$ of $D$ is the graph with vertex set $E$ and in which two vertices $u$ and $v$ of $G_{D}$ are adjacent if and only if $C_{u, v}=1$. Recall that a \emph{looped simple graph} is a graph obtained from a simple graph by adding (exactly) one loop to some of its vertices.  If $D$ is odd, then $G_{D}$ is a looped simple graph, and if $D$ is even, then $G_{D}$ is a simple graph. Note that $D$ is connected if and only if $G_{D}$ is connected.

Conversely, the adjacency matrix $A(G)$ of a looped simple graph $G$ is the matrix over $GF(2)$ whose rows and columns correspond to the vertices of $G$; and where, $A(G)_{u, v}=1$ if and only if $u$ and $v$ are adjacent in $G$ and $A(G)_{u, u}=1$ if and only if there is a loop at $u$.
Let $D$ be a normal binary delta-matroid. It obvious that $D=D(A(G_{D}))$.


\section{Main results}
\begin{proposition}\label{pro 2}
Let $D=(E, \mathcal{F})$ and $\widetilde{D}=(\widetilde{E}, \widetilde{\mathcal{F}})$ be two delta-matroids and $A\subseteq E$. Then
\begin{enumerate}
  \item $^{\partial}w_{D}(1)=2^{|E|}$;
  \item $^{\partial}w_{D}(z)=~^{\partial}w_{D*A}(z);$
  \item $^{\partial}w_{D\oplus \widetilde{D}}(z)=~^{\partial}w_{D}(z)~^{\partial}w_{\widetilde{D}}(z).$
\end{enumerate}
\end{proposition}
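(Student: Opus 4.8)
The plan is to prove each of the three parts separately, since they are largely independent and of increasing difficulty.

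For part (1), I would argue directly from the definition. Since $^{\partial}w_{D}(z)=\sum_{A\subseteq E}z^{w(D*A)}$, evaluating at $z=1$ collapses every monomial $z^{w(D*A)}$ to $1$, so the sum counts the number of subsets $A\subseteq E$, which is exactly $2^{|E|}$. This step is essentially immediate and should take only a sentence.

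For part (2), the key observation is that twisting is an involutive group action of $(2^{E},\Delta)$ on delta-matroids: using the identity $(A\Delta C)\Delta X = A\Delta(C\Delta X)$, one has $(D*A)*B = D*(A\Delta B)$. Therefore, as $B$ ranges over all subsets of $E$, the set $A\Delta B$ also ranges over all subsets of $E$, each exactly once, because $B\mapsto A\Delta B$ is a bijection of $2^{E}$ (its own inverse). Hence
\begin{equation*}
^{\partial}w_{D*A}(z)=\sum_{B\subseteq E}z^{w((D*A)*B)}=\sum_{B\subseteq E}z^{w(D*(A\Delta B))}=\sum_{C\subseteq E}z^{w(D*C)}=~^{\partial}w_{D}(z),
\end{equation*}
where the reindexing $C=A\Delta B$ is the bijection just described. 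The main thing to get right here is the associativity/bijection bookkeeping, but no deep obstacle arises.

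Part (3) is the substantive one, and the hard part will be showing that width is additive under direct sum after twisting, i.e. that $w\big((D\oplus\widetilde{D})*(A\cup\widetilde{A})\big)=w(D*A)+w(\widetilde{D}*\widetilde{A})$ for $A\subseteq E$ and $\widetilde{A}\subseteq\widetilde{E}$. The plan is first to note that because $E\cap\widetilde{E}=\emptyset$, every subset of $E\cup\widetilde{E}$ decomposes uniquely as $A\cup\widetilde{A}$ with $A\subseteq E$, $\widetilde{A}\subseteq\widetilde{E}$, and that twisting commutes with direct sum in the sense $(D\oplus\widetilde{D})*(A\cup\widetilde{A})=(D*A)\oplus(\widetilde{D}*\widetilde{A})$; this follows by unwinding the definitions of direct sum and twist together with the disjointness of ground sets. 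It then suffices to establish that $w(D_{1}\oplus D_{2})=w(D_{1})+w(D_{2})$ for arbitrary delta-matroids $D_{1},D_{2}$. For this I would observe that the feasible sets of $D_{1}\oplus D_{2}$ are the unions $F_{1}\cup F_{2}$ with $F_{i}\in\mathcal{F}(D_{i})$, so a feasible set of $D_{1}\oplus D_{2}$ is of maximum (respectively minimum) cardinality precisely when each $F_{i}$ is; consequently $\mathcal{F}_{max}(D_{1}\oplus D_{2})=\{F_{1}\cup F_{2}: F_{i}\in\mathcal{F}_{max}(D_{i})\}$ and similarly for the minimum. This gives $r\big((D_{1}\oplus D_{2})_{max}\big)=r((D_{1})_{max})+r((D_{2})_{max})$ and likewise for the minimum, whence the widths add by the definition $w=r(D_{max})-r(D_{min})$. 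Combining this additivity of width with the factorization of subsets and the commutation of twist with direct sum, the product formula follows:
\begin{equation*}
^{\partial}w_{D\oplus\widetilde{D}}(z)=\sum_{A\subseteq E}\sum_{\widetilde{A}\subseteq\widetilde{E}}z^{w(D*A)+w(\widetilde{D}*\widetilde{A})}=\Big(\sum_{A\subseteq E}z^{w(D*A)}\Big)\Big(\sum_{\widetilde{A}\subseteq\widetilde{E}}z^{w(\widetilde{D}*\widetilde{A})}\Big)=~^{\partial}w_{D}(z)~^{\partial}w_{\widetilde{D}}(z).
\end{equation*}
I expect the additivity of rank under direct sum for $D_{max}$ and $D_{min}$ to require the most care, since it relies on correctly identifying the extremal feasible sets of a direct sum.
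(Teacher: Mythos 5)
Your proposal is correct and follows essentially the same route as the paper: part (1) by direct evaluation, part (2) via the identity $(D*A)*B=D*(A\Delta B)$ and the resulting bijection (the paper states this as ``the sets of all twists of $D$ and $D*A$ are the same''), and part (3) via the factorization $(D\oplus\widetilde{D})*B=D*(B\cap E)\oplus\widetilde{D}*(B\cap\widetilde{E})$ together with additivity of width over direct sums. The only difference is that you spell out the proof of width additivity (identifying the extremal feasible sets of a direct sum), which the paper simply asserts as the ``underlying phenomenon.''
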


\begin{proof}
For (1), the evaluation $^{\partial}w_{D}(1)$ counts the total number of twists, which is $2^{|E|}$.
For (2), this is because the sets of all twists of $D$ and $D*A$ are the same.
For (3), the underlying phenomenon is the additivity of  width over the direct sum. It follows immediately that for any subset $B\subseteq E\cup \widetilde{E}$, we have
$$(D\oplus \widetilde{D})*B=D*(B\cap E)\oplus \widetilde{D}*(B\cap \widetilde{E}),$$
from which formula (3) follows.
\end{proof}

\begin{remark}
By Proposition \ref{pro 2}, we can observe that analyzing the twist polynomials of all delta-matroids  is equivalent to analyzing
normal delta-matroids. Consequently, it is natural to focus on normal delta-matroids.
\end{remark}

\begin{lemma}\cite{CMNR}\label{lem 1}
Let $G=(V, E)$ be a ribbon graph, $A\subseteq E$ and $e\in E$. Then $D(G^{A})=D(G)*A$ and $\varepsilon(G)=w(D(G))$.
\end{lemma}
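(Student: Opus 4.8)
The plan is to prove the two identities separately, treating $D(G^{A})=D(G)*A$ as the substantive one and deducing $\varepsilon(G)=w(D(G))$ from it. Throughout I take $G$ to be connected; the general case follows from the additivity of Euler genus over connected components together with Proposition \ref{pro 2}(3), since $D(G_{1}\sqcup G_{2})=D(G_{1})\oplus D(G_{2})$. For a ribbon graph $H$ write $v(H)$ for its number of vertices and $f(H)$ for its number of boundary components, and for $S\subseteq E$ write $G[S]$ for the spanning ribbon subgraph $(V,S)$.

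First I would establish $D(G^{A})=D(G)*A$. Since the feasible sets of $D(G)*A$ are exactly the sets $A\Delta S$ with $S$ feasible in $D(G)$, it suffices to show that $S$ is the edge set of a spanning quasi-tree of $G$ if and only if $S\Delta A$ is the edge set of a spanning quasi-tree of $G^{A}$. The key topological input is the structural identity
$$v(G^{A})=f(G[A]),\qquad A\subseteq E,$$
which says that the vertices of a partial dual are counted by the boundary components of the corresponding spanning subgraph; this is the standard fact proved via arrow presentations (ribbon-graph surgery) in the references. Combining it with the composition law $(G^{A})^{S\Delta A}=G^{A\Delta(S\Delta A)}=G^{S}$ for partial duals yields
$$f\bigl((G^{A})[S\Delta A]\bigr)=v\bigl((G^{A})^{S\Delta A}\bigr)=v(G^{S})=f(G[S]),$$
so $G[S]$ and $(G^{A})[S\Delta A]$ always have the same number of boundary components. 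As a spanning subgraph is a quasi-tree precisely when its boundary-component count equals the (partial-duality-invariant) number of connected components, the desired equivalence---and hence the first identity---follows.

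Next I would deduce $\varepsilon(G)=w(D(G))$ by computing the extreme feasible sizes. For a connected spanning ribbon subgraph with a single boundary component, Euler's formula gives $|S|=v(G)-1+\varepsilon(G[S])$, so extremising $|S|$ is the same as extremising the genus over spanning quasi-trees. Any spanning tree is a genus-$0$ quasi-tree of size $v(G)-1$, whence $r(D_{min})=v(G)-1$. For the maximum I would apply the first identity with $A=E$: as $G^{E}=G^{*}$ is the geometric dual, the feasible sets of $D(G)*E=D(G^{*})$ are the complements $E\setminus Q$ of the spanning quasi-trees $Q$ of $G$. Choosing $Q=E\setminus T^{*}$ for a spanning tree $T^{*}$ of $G^{*}$ gives a spanning quasi-tree of $G$ of size $|E|-(v(G^{*})-1)$; since $v(G^{*})=f(G)$ by the structural identity at $A=E$, Euler's formula shows $\varepsilon(G[Q])=\varepsilon(G)$, and as genus cannot increase under deletion of edges this is the largest possible. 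Thus $r(D_{max})=v(G)-1+\varepsilon(G)$ and $w(D(G))=r(D_{max})-r(D_{min})=\varepsilon(G)$.

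The main obstacle is the structural identity $v(G^{A})=f(G[A])$, which is the sole genuinely topological ingredient; once it is in hand, everything reduces to bookkeeping with Euler's formula and the composition and component-preserving properties of partial duality. A secondary subtlety to treat with care is the convention for quasi-trees of disconnected ribbon graphs (one boundary component per connected component), since it is precisely this convention that legitimises the reduction to the connected case through Proposition \ref{pro 2}(3).
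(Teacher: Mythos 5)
The paper does not prove this lemma at all: it is quoted verbatim from \cite{CMNR} as a known result, so there is no in-paper argument to compare yours against. Judged on its own terms, your proof is correct and is essentially the standard argument from the literature. The chain $f\bigl((G^{A})[S\Delta A]\bigr)=v\bigl((G^{A})^{S\Delta A}\bigr)=v(G^{S})=f(G[S])$ is valid given the composition law for partial duals, and together with the invariance of the number of connected components under partial duality it does give the quasi-tree equivalence and hence $D(G^{A})=D(G)*A$. The width computation also checks out: $|S|=v(G)-1+\varepsilon(G[S])$ for a spanning quasi-tree $S$ by Euler's formula, spanning trees give $r(D_{min})=v(G)-1$, the complement of a spanning tree of $G^{*}$ gives a quasi-tree of genus $\varepsilon(G)$, and monotonicity of Euler genus under passing to ribbon subgraphs shows this is extremal. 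The one caveat worth flagging is that essentially all of the topological content of the first identity is concentrated in $v(G^{A})=f(G[A])$, which you import as a black box; that is legitimate (it is proved via arrow presentations in \cite{EM} and used in \cite{CMNR}), but your argument for $D(G^{A})=D(G)*A$ is then really a reduction to that fact rather than an independent proof of it. Your handling of the disconnected case and of the quasi-tree convention (one boundary component per connected component) is also correct, though note that the criterion ``$f(G[S])=c(G)$'' makes the reduction to connected $G$ unnecessary.
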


\begin{lemma}\label{lem 3}
Let $G=(V, E)$ be a ribbon graph. Then $^{\partial}w_{D(G)}(z)=~^{\partial}\varepsilon_{G}(z)$.
\end{lemma}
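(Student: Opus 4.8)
The plan is to prove the identity term by term, matching each edge subset $A\subseteq E$ on the two sides and checking that the corresponding exponents agree. By definition of the partial duality polynomial, the right-hand side enumerates the partial duals of $G$ by Euler genus,
$$^{\partial}\varepsilon_{G}(z)=\sum_{A\subseteq E}z^{\varepsilon(G^{A})},$$
while the twist polynomial on the left-hand side is $\sum_{A\subseteq E}z^{w(D(G)*A)}$ by the definition given above. Both sums are indexed by exactly the same family of subsets, namely all $A$ of the common ground set $E=E(G)=E(D(G))$, so it suffices to show that for each fixed $A$ the exponents $\varepsilon(G^{A})$ and $w(D(G)*A)$ coincide.

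First I would invoke Lemma \ref{lem 1} applied to the ribbon graph $G^{A}$ rather than to $G$. Since the identity $\varepsilon(H)=w(D(H))$ holds for every ribbon graph $H$, taking $H=G^{A}$ yields $\varepsilon(G^{A})=w(D(G^{A}))$. Next I would use the other half of Lemma \ref{lem 1}, the equality $D(G^{A})=D(G)*A$, which states that forming the partial dual with respect to $A$ at the level of ribbon graphs corresponds precisely to twisting the associated delta-matroid by $A$. Substituting this into the previous equation gives $\varepsilon(G^{A})=w(D(G)*A)$ for every $A\subseteq E$.

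Summing this equality over all $A\subseteq E$, with $z$ raised to the common exponent in each term, then produces $^{\partial}\varepsilon_{G}(z)={}^{\partial}w_{D(G)}(z)$, which is the desired identity. There is essentially no hard step here: the entire mathematical content is already packaged in Lemma \ref{lem 1}, and what remains is the bookkeeping observation that partial duality of ribbon graphs and twisting of delta-matroids are parametrized by the same collection of edge subsets. The only point requiring a moment's care is that the Euler-genus-to-width translation must be applied to the partial dual $G^{A}$ and not to $G$ itself, so that the genus of each partial dual is correctly converted into the width of the corresponding twist.
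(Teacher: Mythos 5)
Your proposal is correct and follows exactly the same route as the paper: apply Lemma \ref{lem 1} to get $w(D(G)*A)=w(D(G^{A}))=\varepsilon(G^{A})$ for each $A\subseteq E$, then sum over all subsets. The paper's proof is just a more compressed version of the same argument, including the implicit step you flag of applying the genus-to-width identity to the partial dual $G^{A}$ rather than to $G$.
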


\begin{proof}
By Lemma \ref{lem 1}, for any $A\subseteq E$, $$w(D(G)*A)=w(D(G^{A}))=\varepsilon(G^{A}).$$
Hence $^{\partial}w_{D(G)}(z)=~^{\partial}\varepsilon_{G}(z)$.
\end{proof}

\begin{theorem}\label{main-2}
If two normal binary delta-matroids $D$ and $\widetilde{D}$ have the same intersection graph, then $^{\partial}w_{D}(z)=~^{\partial}w_{\widetilde{D}}(z)$.
\end{theorem}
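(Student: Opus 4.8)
The plan is to build the proof on the fact recorded at the end of the previous subsection, namely that a normal binary delta-matroid is recovered from its intersection graph through $D = D(A(G_{D}))$. Indeed, the symmetric $GF(2)$ matrix representing a normal binary delta-matroid is \emph{unique}: the displayed recovery procedure forces every diagonal entry $C_{v,v}$ (from the feasible singletons) and every off-diagonal entry $C_{u,v}$ (from the feasible sets of size two), so two symmetric matrices defining the same $D$ must coincide. Hence the intersection graph is a complete invariant of $D$, and the theorem splits into two assertions: (a) normal binary delta-matroids with the same intersection graph are isomorphic, and (b) the twist polynomial is an isomorphism invariant.

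For (a), I would let $G$ be the common intersection graph and fix a graph isomorphism $\phi \colon E \to \widetilde{E}$ from $G_{D}$ to $G_{\widetilde{D}}$. Writing $C = A(G_{D})$ and $\widetilde{C} = A(G_{\widetilde{D}})$ for the (unique) representing matrices, the fact that $\phi$ preserves both edges and loops says exactly that $C_{u,v} = \widetilde{C}_{\phi(u),\phi(v)}$ for all $u,v \in E$, diagonal entries included. Consequently, for any $F \subseteq E$ the submatrix $\widetilde{C}[\phi(F)]$ is obtained from $C[F]$ by the simultaneous row and column permutation $\phi$, so the two are nonsingular together. Since $D = D(C)$ and $\widetilde{D} = D(\widetilde{C})$, this means $F$ is feasible in $D$ if and only if $\phi(F)$ is feasible in $\widetilde{D}$; that is, $\phi$ is an isomorphism of delta-matroids.

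For (b), I would check directly that twisting commutes with relabeling. Because a bijection commutes with symmetric difference, for every $A \subseteq E$ one has $\widetilde{D} * \phi(A) = \phi(D * A)$, and an isomorphism clearly preserves feasible-set cardinalities and hence width, so $w(\widetilde{D} * \phi(A)) = w(D * A)$. As $A$ runs over the subsets of $E$, the image $\phi(A)$ runs over the subsets of $\widetilde{E}$; matching the two sums term by term then yields ${}^{\partial}w_{D}(z) = {}^{\partial}w_{\widetilde{D}}(z)$.

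I do not anticipate a serious obstacle here: the argument is short, and the only points needing care are that the representing matrix is genuinely unique (so that $G_{D}$ really does determine $D$) and that width is preserved under isomorphism. It may nonetheless be worth recording the explicit identity $w(D * A) = \rho_{D}(A) + \rho_{D}(E \setminus A) - |E|$, obtained by rewriting $r((D*A)_{\max}) = \rho_{D*A}(E) = \rho_{D}(E \setminus A)$ and $r((D*A)_{\min}) = |E| - \rho_{D*A}(\emptyset) = |E| - \rho_{D}(A)$. Since $\rho_{D}$ is determined by the matrix $C = A(G_{D})$, this formula makes transparent that every exponent occurring in ${}^{\partial}w_{D}(z)$ depends only on the intersection graph, giving a second, computation-based route to the same conclusion.
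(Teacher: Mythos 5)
Your proposal is correct and follows essentially the same route as the paper: the paper's proof is the one-line observation that $D=D(A(G_{D}))$ and $\widetilde{D}=D(A(G_{\widetilde{D}}))$ force $D=\widetilde{D}$ when the intersection graphs coincide, whence the twist polynomials agree. Your version merely fills in the details the paper leaves implicit (uniqueness of the representing matrix, and the routine check that a relabeling commutes with twisting and preserves width), so it is the same argument made explicit.
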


\begin{proof}
Since $G_{D}=G_{\widetilde{D}}$, $D=D(A_{G_{D}})$ and $\widetilde{D}=D(A_{G_{\widetilde{D}}})$, it follows that $D=\widetilde{D}$. Therefore $^{\partial}w_{D}(z)=~^{\partial}w_{\widetilde{D}}(z)$.
\end{proof}

\begin{theorem}\label{main-1}

Let $B$ and $\widetilde{B}$ be two bouquets. If $G_{D(B)}=G_{D(\widetilde{B})}$, then $^{\partial}\varepsilon_{B}(z)={^{\partial}}\varepsilon_{\widetilde{B}}(z)$.
\end{theorem}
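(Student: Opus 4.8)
The plan is to pass from partial duality polynomials of bouquets to twist polynomials of their associated delta-matroids, where Theorem \ref{main-2} applies directly. First I would invoke Lemma \ref{lem 3} for both bouquets: since a bouquet is in particular a ribbon graph, it gives $^{\partial}\varepsilon_{B}(z)={^{\partial}}w_{D(B)}(z)$ and $^{\partial}\varepsilon_{\widetilde{B}}(z)={^{\partial}}w_{D(\widetilde{B})}(z)$. Thus it suffices to prove $^{\partial}w_{D(B)}(z)={^{\partial}}w_{D(\widetilde{B})}(z)$, and the problem is reduced to a statement about delta-matroids.

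Next I would check that $D(B)$ and $D(\widetilde{B})$ are normal binary delta-matroids, so that Theorem \ref{main-2} becomes applicable. Normality is immediate: the empty edge set of a bouquet corresponds to the spanning subgraph consisting of the single vertex, which is a disc with one boundary component and hence a spanning quasi-tree, so $\emptyset\in\mathcal{F}(D(B))$. For the binary property I would use the interlacement matrix $C$ of the bouquet over $GF(2)$: set $C_{e,e}=1$ exactly when $e$ is a non-orientable loop, and $C_{e,f}=1$ exactly when the loops $e$ and $f$ interlace in the signed rotation. One then shows $D(B)=D(C)$, that is, a subset $F\subseteq E$ is the edge set of a spanning quasi-tree of $B$ if and only if $C[F]$ is non-singular; this exhibits $D(B)$ as normal binary, with intersection graph $G_{D(B)}$ equal to the interlacement graph of $B$ (loops marking the non-orientable edges, in agreement with the diagonal rule $C_{e,e}=1\iff\{e\}\in\mathcal{F}$).

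With both delta-matroids shown to be normal binary and sharing the same intersection graph $G_{D(B)}=G_{D(\widetilde{B})}$ by hypothesis, Theorem \ref{main-2} yields $^{\partial}w_{D(B)}(z)={^{\partial}}w_{D(\widetilde{B})}(z)$. Chaining this with the two identities from Lemma \ref{lem 3} gives $^{\partial}\varepsilon_{B}(z)={^{\partial}}w_{D(B)}(z)={^{\partial}}w_{D(\widetilde{B})}(z)={^{\partial}}\varepsilon_{\widetilde{B}}(z)$, as desired.

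The main obstacle is the binary step: verifying $D(B)=D(C)$ for the interlacement matrix, i.e.\ the equivalence ``$F$ is a spanning quasi-tree $\iff$ $C[F]$ is non-singular over $GF(2)$.'' This rests on the standard but nontrivial fact from the theory of ribbon-graphic delta-matroids that the number of boundary components of a spanning ribbon subgraph of a bouquet is controlled by the $GF(2)$-rank of the corresponding principal submatrix of $C$; once this correspondence is in hand, everything else is formal bookkeeping and the conclusion follows by combining Lemma \ref{lem 3} with Theorem \ref{main-2}.
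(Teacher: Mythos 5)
Your proposal is correct and follows essentially the same route as the paper: both arguments hinge on the fact that a normal binary delta-matroid is determined by its intersection graph (so $G_{D(B)}=G_{D(\widetilde{B})}$ forces $D(B)=D(\widetilde{B})$) combined with the correspondence between partial duality/Euler genus and twist/width from Lemma \ref{lem 1}. The only difference is presentational: you cite Lemma \ref{lem 3} and Theorem \ref{main-2} and explicitly verify via the interlacement matrix that $D(B)$ is normal binary, whereas the paper inlines the same computations and takes that verification for granted.
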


\begin{proof}
If $G_{D(B)}=G_{D(\widetilde{B})}$, then $D(B)=D(\widetilde{B})$. For any $A\subseteq E(B)$, we denoted its corresponding
subset of $E(\widetilde{B})$ by $\widetilde{A}$.  By Lemma \ref{lem 1},
$$D(B^{A})=D(B)*A=D(\widetilde{B})*\widetilde{A}=D(\widetilde{B}^{\widetilde{A}}).$$
We have $w(D(B^{A}))=w(D(\widetilde{B}^{\widetilde{A}}))$.
Since $w(D(B^{A}))=\varepsilon(B^{A})$ and $w(D(\widetilde{B}^{\widetilde{A}}))=\varepsilon(\widetilde{B}^{\widetilde{A}})$, it follows that $\varepsilon(B^{A})=\varepsilon(\widetilde{B}^{\widetilde{A}}).$ Thus $^{\partial}\varepsilon_{B}(z)=~^{\partial}\varepsilon_{\widetilde{B}}(z).$
\end{proof}

\begin{lemma}\cite{QYXJ}\label{lem4}
Let $B_{t}$ be a bouquet with the signed rotation $$(1, 2, 3, . . . , t, 1, 2, 3, . . . , t).$$ We have
\begin{eqnarray*}
^{\partial}\varepsilon_{B_{t}}(z)=\left\{\begin{array}{ll}
                    2^{t}z^{t-1}, & \mbox{if}~t~\mbox{is odd}\\
                    2^{t-1}z^{t}+2^{t-1}z^{t-2}, & \mbox{if}~t~\mbox{is even.}
                   \end{array}\right.
\end{eqnarray*}
\end{lemma}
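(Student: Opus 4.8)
The plan is to pass from the topological object $B_t$ to its normal binary delta-matroid and compute all twist widths algebraically. By Lemma \ref{lem 3} it suffices to determine ${}^{\partial}w_{D(B_t)}(z)$, so the first step is to identify $D(B_t)$ explicitly. Since every edge of $B_t$ is an orientable loop and, in the signed rotation $(1,2,\ldots,t,1,2,\ldots,t)$, every pair $i<j$ occurs in the interlaced cyclic order $i,j,i,j$, the recipe for the matrix of a normal binary delta-matroid gives $C_{i,i}=0$ for every $i$ (no singleton is feasible, as a single orientable loop together with its vertex has two boundary components and so is not a quasi-tree) and $C_{i,j}=1$ for all $i\neq j$. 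Hence $D(B_t)=D(C)$ with $C=A(K_t)$, the $GF(2)$ adjacency matrix of the complete graph, and its intersection graph is $K_t$. In particular $\emptyset$ is feasible, so $D(B_t)$ is normal.

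Second, I would pin down the feasible sets of $D(C)$. For $X\subseteq E$ the principal submatrix $C[X]$ equals $A(K_{|X|})=I+J$ over $GF(2)$, and the matrix determinant lemma gives $\det(I+J)=1+|X|\pmod 2$. Thus $C[X]$ is non-singular precisely when $|X|$ is even, so $\mathcal{F}(D(C))=\{X\subseteq E:|X|\text{ even}\}$; this also confirms that $D(B_t)$ is even, as expected.

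Third, I would compute $w(D(C)*A)$ for each $A\subseteq E$. The feasible sets of $D(C)*A$ are $\{A\Delta X:|X|\text{ even}\}$, and since $w(D)=r(D_{max})-r(D_{min})$ is the difference between the largest and smallest cardinalities of feasible sets, I need $\max|A\Delta X|$ and $\min|A\Delta X|$ over even $X$. Writing $a=|A|$ and noting that $|A\Delta X|\equiv a\pmod 2$ whenever $|X|$ is even, an element-by-element optimization shows that the minimum is $0$ if $a$ is even (take $X=A$) and $1$ if $a$ is odd, while the maximum is $t$ if $t\equiv a\pmod 2$ (take $X=E\setminus A$) and $t-1$ otherwise. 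Running the four parity cases yields width $t-1$ for every $A$ when $t$ is odd, and width $t$ for even $A$ and width $t-2$ for odd $A$ when $t$ is even.

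Finally, I would assemble the generating function by counting subsets according to $|A|\bmod 2$: there are $2^{t-1}$ subsets of each parity. For $t$ odd this gives $\sum_{A}z^{t-1}=2^{t}z^{t-1}$, and for $t$ even it gives $2^{t-1}z^{t}+2^{t-1}z^{t-2}$, as claimed. I expect the only delicate point to be the third step, namely the parity bookkeeping for the extremal values of $|A\Delta X|$; in particular one must verify that the naive optima $X=A$ and $X=E\setminus A$ have to be perturbed by exactly one element precisely when their cardinality has the wrong parity. The identification $D(B_t)=D(A(K_t))$ and the determinant computation are routine given the machinery already in place.
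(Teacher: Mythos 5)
Your argument is correct, and it is worth noting that the paper itself gives no proof of Lemma \ref{lem4}: the statement is imported from \cite{QYXJ}, where it is established topologically, essentially by analyzing the boundary components (hence the Euler genus) of the partial duals $B_t^A$ directly on the surface. Your route is genuinely different: you first identify $D(B_t)$ as the normal binary delta-matroid $D(A(K_t))$ via the interlacement of the loops, observe that $\det(I+J_n)\equiv 1+n \pmod 2$ forces $\mathcal{F}(D(B_t))$ to be exactly the even subsets of $E$, and then carry out the parity bookkeeping for $\max$ and $\min$ of $|A\Delta X|$ over even $X$; all four parity cases check out, and the count of $2^{t-1}$ subsets of each parity assembles the polynomial. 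Transferring back to $^{\partial}\varepsilon_{B_t}(z)$ via Lemma \ref{lem 3} is legitimate and non-circular (Proposition \ref{pro 1} in the paper goes in the opposite direction, deducing the delta-matroid statement from the ribbon-graph one, so your computation would in fact render that proposition immediate). What your approach buys is a proof entirely inside the delta-matroid framework of this paper, with the only topological input being that the delta-matroid of a bouquet is normal and binary with interlacement matrix $A(K_t)$; what the original topological proof buys is independence from the binary representability machinery. One small point you should make explicit is the justification that $D(B_t)$ is binary and normal (so that the matrix recipe applies); this is standard and is the same fact the paper relies on when asserting $G_{D(B_v)}=K_v$ in the proof of Proposition \ref{pro 1}, but it deserves a citation to \cite{CMNR}.
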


\begin{proposition}\label{pro 1}
Let $D$ be a normal binary delta-matroid and let $v$ be the number of vertices of $G_{D}$. If $G_{D}$ is a complete graph, then
\[^{\partial}w_{D}(z)=\left\{\begin{array}{ll}
2^{v}z^{v-1}, & \text{if $v$ is odd}\\

2^{v-1}z^{v}+2^{v-1}z^{v-2}, & \text{if $v$ is even}.
\end{array}\right.\]
\end{proposition}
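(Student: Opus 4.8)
The plan is to reduce the computation to the already-known partial-duality polynomial of a single explicit ribbon graph, namely the bouquet $B_v$ of Lemma \ref{lem4} whose signed rotation is $(1,2,\dots,v,1,2,\dots,v)$. By Theorem \ref{main-2}, any two normal binary delta-matroids sharing an intersection graph have equal twist polynomials, so it suffices to exhibit one normal binary delta-matroid whose intersection graph is the complete graph $K_v$ and whose twist polynomial we can already evaluate. I would argue that $D(B_v)$ is exactly such a delta-matroid, so that $^{\partial}w_D(z) = {}^{\partial}w_{D(B_v)}(z)$ and the latter is computable.

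First I would establish that $G_{D(B_v)} = K_v$. Since $D(B_v)$ is the delta-matroid of a bouquet, the empty set is feasible and $D(B_v)$ is binary, so it is a normal binary delta-matroid and its intersection matrix $C$ is recovered from the feasible sets of size at most two by the recipe recalled in the Preliminaries. The main work is to read off these small feasible sets from the ribbon-graph geometry of $B_v$: the empty set is feasible; a single orientable loop $\{i\}$ yields an annulus, which has two boundary components and hence is not a spanning quasi-tree, so $\{i\}\notin\mathcal{F}$ and $C_{i,i}=0$; and any pair $\{i,j\}$, whose induced signed rotation is the interlacing pattern $(i,j,i,j)$, yields a once-punctured torus with a single boundary component, so $\{i,j\}$ is a spanning quasi-tree and $\{i,j\}\in\mathcal{F}$. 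Because $\{i,j\}$ is feasible while neither $\{i\}$ nor $\{j\}$ is, the recipe forces $C_{i,j}=1$ for every pair $i\neq j$. Thus $C=A(K_v)$ and $G_{D(B_v)}=K_v$.

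With this identification the conclusion is a formal chain. Given $G_D = K_v = G_{D(B_v)}$, Theorem \ref{main-2} yields $^{\partial}w_D(z) = {}^{\partial}w_{D(B_v)}(z)$; Lemma \ref{lem 3} gives $^{\partial}w_{D(B_v)}(z) = {}^{\partial}\varepsilon_{B_v}(z)$; and Lemma \ref{lem4} evaluated at $t=v$ produces precisely the claimed two-case formula, splitting on the parity of $v$.

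I expect the crux to be the boundary-component count verifying $G_{D(B_v)}=K_v$, that is, confirming that every singleton is infeasible and every pair is feasible. This is a geometric computation about the sub-bouquets of $B_v$; once one observes that the chord diagram $(1,2,\dots,v,1,2,\dots,v)$ has all its chords pairwise crossing, the identification of the intersection graph as $K_v$ follows, and the remaining steps are routine applications of the cited results.
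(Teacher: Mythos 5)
Your proposal is correct and follows essentially the same route as the paper: reduce to the bouquet $B_v$ with signed rotation $(1,2,\dots,v,1,2,\dots,v)$ via Theorem \ref{main-2}, then apply Lemma \ref{lem 3} and Lemma \ref{lem4}. The only difference is that you spell out the boundary-component computation showing $G_{D(B_v)}=K_v$, which the paper dismisses as obvious; your verification of that step is accurate.
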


\begin{proof}
By Theorem \ref{main-2}, we just need to find a normal binary delta-matroid $\widetilde{D}$ such that $G_{\widetilde{D}}=G_{D}$ and  the evaluation $^{\partial}w_{\widetilde{D}}(z)$ is easy to obtained. Let $B_{v}$ be a bouquet with the signed rotation $(1, 2, 3, . . . , v, 1, 2, 3, . . . , v).$
Obviously, $G_{D(B_{v})}=G_{D}=K_{v}$. By Lemma  \ref{lem 3} and Theorem \ref{main-2},
$$^{\partial}w_{D}(z)=~^{\partial}w_{D(B_{v})}(z)=~^{\partial}\varepsilon_{B_v}(z).$$
Therefore $^{\partial}w_{D}(z)$ can be obtained by Lemma \ref{lem4}.
\end{proof}

\begin{theorem}
Let $D=(E, \mathcal{F})$ be a delta-matroid. Then $^{\partial}w_{D}(z)=k$  for some integer $k$ if and only if $|\mathcal{F}|=1$.
\end{theorem}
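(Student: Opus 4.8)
The plan is to translate the statement about $^{\partial}w_{D}(z)$ being constant into a statement about widths, and then to exploit the freedom in choosing the twisting set. First I would note that every exponent is a nonnegative integer, since by definition $w(D*A)=r((D*A)_{max})-r((D*A)_{min})\geq 0$. Consequently the generating function $\sum_{A\subseteq E}z^{w(D*A)}$ is a constant polynomial if and only if every exponent vanishes, i.e. $w(D*A)=0$ for all $A\subseteq E$; in that case the constant is forced to be $k=2^{|E|}$. Recalling from the preliminaries that $w(D*A)=0$ precisely when $D*A$ is a matroid (equivalently, all feasible sets of $D*A$ have equal cardinality), the whole theorem reduces to the equivalence: $D*A$ is a matroid for every $A\subseteq E$ if and only if $|\mathcal{F}|=1$.

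For the forward implication I would argue directly. If $\mathcal{F}=\{F_{0}\}$, then for every $A\subseteq E$ the twist $D*A=(E,\{A\Delta F_{0}\})$ has exactly one feasible set, so trivially all its feasible sets share the same cardinality and $w(D*A)=0$. Hence $^{\partial}w_{D}(z)=2^{|E|}$, a constant, giving $k=2^{|E|}$.

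For the converse I would use the contrapositive together with a well-chosen twist. Suppose $|\mathcal{F}|\geq 2$ and pick two distinct feasible sets $F_{1},F_{2}\in\mathcal{F}$. Taking $A=F_{1}$, the feasible sets of $D*F_{1}=(E,\{F_{1}\Delta F: F\in\mathcal{F}\})$ include $F_{1}\Delta F_{1}=\emptyset$, of cardinality $0$, and $F_{1}\Delta F_{2}$, of cardinality $|F_{1}\Delta F_{2}|>0$ (nonzero because $F_{1}\neq F_{2}$). Thus $D*F_{1}$ has feasible sets of two distinct cardinalities, so $(D*F_{1})_{min}$ and $(D*F_{1})_{max}$ have different ranks and $w(D*F_{1})\geq|F_{1}\Delta F_{2}|>0$. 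This produces a term of positive degree in $^{\partial}w_{D}(z)$, so the polynomial is non-constant, completing the contrapositive.

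As for the main obstacle, there is essentially no analytic difficulty: the only real idea is the normalizing choice $A=F_{1}$, which sends one feasible set to $\emptyset$ and thereby forces a cardinality gap against any second feasible set. The one point that deserves care is the initial reduction itself, namely justifying that a generating function with nonnegative exponents is a constant exactly when all exponents are zero; in particular one must rule out that "constant" could secretly mean all exponents equal to a common positive value (which would instead yield a monomial $2^{|E|}z^{m}$, not a constant).
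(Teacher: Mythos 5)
Your proof is correct, and it follows the same overall strategy as the paper: reduce the statement to showing that $|\mathcal{F}|\geq 2$ forces some twist of positive width, then exhibit such a twist explicitly. The execution differs in one genuine way. The paper first extracts $w(D)=0$ from the hypothesis to conclude $|A_{1}|=|A_{2}|$ for two distinct feasible sets, then twists by a \emph{single element} $x\in A_{1}\setminus A_{2}$ (nonempty precisely because the two sets are distinct and equicardinal), obtaining feasible sets $A_{1}\setminus x$ and $A_{2}\cup x$ of cardinalities differing by $2$; this step invokes Proposition \ref{pro 2}(2) to transfer the contradiction back to $^{\partial}w_{D}(z)$. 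You instead twist by the whole feasible set $F_{1}$, which normalizes $F_{1}$ to $\emptyset$ and immediately yields a cardinality gap $|F_{1}\Delta F_{2}|>0$; since the term $z^{w(D*F_{1})}$ already appears in the defining sum, you need neither the preliminary deduction $|A_{1}|=|A_{2}|$ nor the twist-invariance of the polynomial. Your version is slightly more economical; the paper's version illustrates the use of Proposition \ref{pro 2}(2) and stays within single-element twists. You were also right to flag, and correctly resolve, the one delicate point in the reduction: a sum of monomials with nonnegative exponents is constant only when every exponent vanishes, which rules out the polynomial collapsing to a single monomial of positive degree.
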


\begin{proof}
The sufficiency is straightforward. To prove the necessity, suppose that $|\mathcal{F}|\geq 2$, then there exist $A_{1}, A_{2}\in \mathcal{F}$ such that $A_{1}\neq A_{2}$. Since $^{\partial}w_{D}(z)=k$,
we have $w(D)=0$. Thus $|A_{1}|=|A_{2}|$. Then for any $x\in A_{1}\backslash A_{2}$ we have $A_{1}\backslash x, A_{2}\cup x\in \mathcal{F}(D*x)$. Obviously, $|A_{1}\backslash x|\neq |A_{2}\cup x|$, a contradiction, since $~^{\partial}w_{D}(z)=~^{\partial}w_{D*x}(z)=k$ by Proposition \ref{pro 2}.
\end{proof}

\begin{lemma}\cite{CMNR}\label{lem 2}
Let $D=(E, \mathcal{F})$ be a delta-matroid and $A\subseteq E$. Then $$w(D|_A)=\rho_{D}(A)-r_{D_{min}}(A)-n_{D_{min}}(E)+n_{D_{min}}(A).$$
\end{lemma}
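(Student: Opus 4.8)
The plan is to compute $w(D|_A)=r((D|_A)_{max})-r((D|_A)_{min})$ directly from an explicit description of the feasible sets of the restriction. Writing $B:=E\setminus A$ and $\mu:=\min_{F\in\mathcal{F}}|F\cap B|$, I would first prove, by induction on $|B|$, that
$$\mathcal{F}(D|_A)=\{F\cap A:F\in\mathcal{F},\ |F\cap B|=\mu\}.$$
Peeling off one element $e\in B$ at a time and splitting into the three cases of the deletion rule, the loop and coloop cases are immediate book-keeping (a loop leaves $\mathcal{F}$ unchanged on the smaller ground set; a coloop strips $e$ off every feasible set and raises the recorded overlap by one), while the ordinary case uses the symmetric exchange axiom to show that the minimum overlap $\mu$, together with every trace $F\cap A$ it produces, is still realised by feasible sets avoiding $e$. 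Setting $\mathcal{F}_\mu:=\{F\in\mathcal{F}:|F\cap B|=\mu\}$, every feasible set of $D|_A$ then has the form $F\cap A$ with $|F\cap A|=|F|-\mu$, so
$$w(D|_A)=\max_{F\in\mathcal{F}_\mu}|F\cap A|-\min_{F\in\mathcal{F}_\mu}|F\cap A|.$$

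Next I would pin down the two extrema separately. For the minimum I would show, again by a symmetric exchange argument, that a minimum-cardinality feasible set of $D$ can be chosen to meet $B$ in exactly $\mu$ elements: starting from any feasible set realising $\mu$, one shrinks it toward size $r(D_{min})$ without increasing the overlap. Since the minimum-cardinality feasible sets are the bases of the matroid $D_{min}$, this yields both the book-keeping identity $\mu=r(D_{min})-r_{D_{min}}(A)$ and the value $\min_{F\in\mathcal{F}_\mu}|F\cap A|=r_{D_{min}}(A)$ (one can in fact check $(D|_A)_{min}=(D_{min})|_A$, but only the rank equality is needed). For the maximum I would use the reformulation, coming from $|E|-|A\Delta F|=|F\cap A|+|B\setminus F|$,
$$\rho_D(A)=|E|-\min_{F\in\mathcal{F}}|A\Delta F|=|B|+\max_{F\in\mathcal{F}}\big(|F\cap A|-|F\cap B|\big).$$
Putting $g(F):=|F\cap A|-|F\cap B|$, it remains to see that $\max_{F\in\mathcal{F}}g(F)$ is attained inside $\mathcal{F}_\mu$. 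Among all maximizers of $g$ I would select one, say $F$, of least overlap $|F\cap B|$ and, secondarily, of largest intersection with a fixed element of $\mathcal{F}_\mu$; if $|F\cap B|>\mu$, then applying the exchange axiom to a suitable $u\in(F\cap B)\setminus\mathcal{F}_\mu$-witness produces a feasible set that either raises $g$, or lowers the overlap, or enlarges the chosen intersection, each contradicting the choice of $F$. Hence $\max_{F\in\mathcal{F}_\mu}|F\cap A|=\mu+\max_{F\in\mathcal{F}}g(F)=\rho_D(A)-|B|+\mu$.

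Finally I would combine the two computations. Substituting into the expression for $w(D|_A)$ and using $\mu=r(D_{min})-r_{D_{min}}(A)$, $|B|=|E|-|A|$, $n_{D_{min}}(E)=|E|-r(D_{min})$ and $n_{D_{min}}(A)=|A|-r_{D_{min}}(A)$ gives
$$w(D|_A)=\big(\rho_D(A)-|B|+\mu\big)-r_{D_{min}}(A)=\rho_D(A)-r_{D_{min}}(A)-n_{D_{min}}(E)+n_{D_{min}}(A),$$
as required. I expect the main obstacle to lie in the two symmetric exchange arguments: establishing the exact feasible-set description in the ordinary-deletion case of the first step, and — above all — showing in the third step that a $g$-maximizer can be taken with minimum overlap with $B$, where a plain exchange may only swap elements within $B$ and the lexicographic secondary choice is what forces termination. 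The identity $\mu=r(D_{min})-r_{D_{min}}(A)$ from the second step is the technical glue that makes the final substitution collapse to the stated form.
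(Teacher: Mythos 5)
First, a point of comparison: the paper offers no proof of this lemma at all --- it is quoted from \cite{CMNR} --- so there is no in-paper argument to measure yours against. Your overall plan is sound and the final bookkeeping is correct: granting your three structural facts --- the trace description $\mathcal{F}(D|_A)=\{F\cap A: F\in\mathcal{F},\ |F\cap B|=\mu\}$, the identity $\mu=r(D_{min})-r_{D_{min}}(A)$ (equivalently $(D|_A)_{min}=D_{min}|_A$), and the attainment of $\max_{F}\bigl(|F\cap A|-|F\cap B|\bigr)$ inside $\mathcal{F}_\mu$ --- the substitution does yield the stated formula, and all three facts are true (they are precisely the kind of auxiliary lemmas one finds proved separately in \cite{CMNR}). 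Your third exchange argument also genuinely closes: taking $F$ a $g$-maximizer of least overlap and, secondarily, of largest intersection with a fixed $F_0\in\mathcal{F}_\mu$, and exchanging at $u\in(F\cap B)\setminus F_0$, every possible partner $v$ (equal to $u$, in $F\setminus F_0$, or in $F_0\setminus F$, and in $A$ or in $B$) violates one of your three criteria.

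The genuine gaps are in the first two steps. In the induction for the trace description, the case that actually needs the symmetric exchange axiom is not the ``ordinary'' one but the coloop case: if every $F$ with $|F\cap B'|=\mu'$ contains $e$, the new minimum overlap is $\mu'+1$ and is also attained by feasible sets with $|F\cap B'|=\mu'+1$ that avoid $e$, whose traces you must show already occur as traces of sets in $\mathcal{F}_{\mu'}$; your sketch does not address this, while the ordinary and loop cases really are pure bookkeeping. More seriously, the claim that a feasible set realising $\mu$ can be ``shrunk toward size $r(D_{min})$ without increasing the overlap'' does not follow from a plain exchange: applying the axiom to $(F,Y,u)$ with $Y\in\mathcal{F}_{min}$ and $u\in F\setminus Y$ may return $v\in Y\setminus F$ with $u\in A$ and $v\in B$, producing a feasible set of the \emph{same} cardinality whose overlap with $B$ has strictly \emph{increased}; this outcome contradicts neither the minimality of the overlap nor any tie-breaker you name, so the descent need not terminate. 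Choosing $u\in(F\setminus Y)\cap B$ when possible repairs all but the case $F\setminus Y\subseteq A$, which still stalls, so you need either a sharper choice of the auxiliary set $Y$ together with a different lexicographic order, or to establish $(D\setminus B)_{min}=D_{min}\setminus B$ as a lemma in its own right before running the arithmetic. As it stands the proposal is a plausible programme rather than a proof.
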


\begin{lemma}\label{lem 5}
Let $D=(E, \mathcal{F})$ be a normal delta-matroid and $A\subseteq E$. Then \[w(D*A)=w(D|_{A})+w(D|_{A^{c}}).\]
\end{lemma}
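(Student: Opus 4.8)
The plan is to reduce both sides of the identity to the delta-matroid rank function $\rho_D$ and then verify an algebraic identity, leaning on the immediately preceding Lemma \ref{lem 2}.

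First I would exploit normality to simplify the right-hand side. Since $\emptyset\in\mathcal{F}$, the empty set is the unique feasible set of minimum cardinality, so $\mathcal{F}_{min}(D)=\{\emptyset\}$ and $D_{min}$ is the rank-$0$ matroid on $E$ in which every element is a loop. Consequently $r_{D_{min}}(A)=0$ and $n_{D_{min}}(A)=|A|$ for every $A\subseteq E$, while $n_{D_{min}}(E)=|E|$. Substituting these values into Lemma \ref{lem 2} collapses it to
$$w(D|_A)=\rho_D(A)-|E|+|A|=\rho_D(A)-|A^c|,$$
and the same computation applied to $A^c$ gives $w(D|_{A^c})=\rho_D(A^c)-|A|$. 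Adding the two, the target right-hand side becomes $\rho_D(A)+\rho_D(A^c)-|E|$.

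Next I would compute the left-hand side directly from the definition of width. The feasible sets of $D*A$ are exactly the sets $A\Delta X$ with $X\in\mathcal{F}$, so $w(D*A)=\max_{X\in\mathcal{F}}|A\Delta X|-\min_{X\in\mathcal{F}}|A\Delta X|$. The minimum is immediate from the definition $\rho_D(A)=|E|-\min_{X\in\mathcal{F}}|A\Delta X|$, giving $\min_{X\in\mathcal{F}}|A\Delta X|=|E|-\rho_D(A)$. For the maximum I would use the complementation identity $A^c\Delta X=E\setminus(A\Delta X)$, so that $|A^c\Delta X|=|E|-|A\Delta X|$; hence
$$\max_{X\in\mathcal{F}}|A\Delta X|=|E|-\min_{X\in\mathcal{F}}|A^c\Delta X|=|E|-\bigl(|E|-\rho_D(A^c)\bigr)=\rho_D(A^c).$$
Therefore $w(D*A)=\rho_D(A^c)-\bigl(|E|-\rho_D(A)\bigr)=\rho_D(A)+\rho_D(A^c)-|E|$, which matches the right-hand side and closes the argument.

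The step needing the most care is the maximum computation: one must recognize that maximizing $|A\Delta X|$ over feasible $X$ is the same as minimizing the distance from $X$ to $A^c$, which is precisely what $\rho_D(A^c)$ records through the complementation identity. Normality is essential in two places, namely in forcing $\mathcal{F}_{min}=\{\emptyset\}$ so that $D_{min}$ has rank $0$ and Lemma \ref{lem 2} collapses, and implicitly in guaranteeing $D$ has no coloops so that the restrictions behave as simple deletions; without it the present argument breaks down. As a more combinatorial alternative that avoids Lemma \ref{lem 2} altogether, one can write $|A\Delta X|=|A|+|X\cap A^c|-|X\cap A|$, reducing the claim to the equality $\max_{X\in\mathcal{F}}\bigl(|X\cap A|-|X\cap A^c|\bigr)=\max\{|F|:F\in\mathcal{F},\,F\subseteq A\}$, which one establishes by repeatedly applying the symmetric exchange axiom to the pair $(X,\emptyset)$ to push a maximizer $X$ into $A$ without decreasing the objective.
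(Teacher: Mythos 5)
Your proof is correct and follows essentially the same route as the paper's: both collapse Lemma \ref{lem 2} under normality to rewrite $w(D|_{A})+w(D|_{A^{c}})$ as $\rho_{D}(A)+\rho_{D}(A^{c})-|E|$, and both then identify this quantity with $w(D*A)$ via the complementation identity $|A^{c}\Delta X|=|E|-|A\Delta X|$ (which the paper phrases through the dual delta-matroid, $(D*A)^{*}=D*A^{c}$, rather than through set complements directly). The only cosmetic difference is that you compute the two sides separately and meet in the middle, while the paper transforms the right-hand side into the left in one chain.
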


\begin{proof}
Since $D$ is a normal delta-matroid, it follows that $D_{min}=(E, \{\emptyset\})$. We have $r_{D_{min}}(A)=r_{D_{min}}(A^{c})=0$, $n_{D_{min}}(E)=|E|, n_{D_{min}}(A)=|A|$ and $n_{D_{min}}(A^{c})=|A^{c}|$.
Then by Lemma \ref{lem 2},
\begin{eqnarray*}
&~&w(D|_{A})+w(D|_{A^{c}})\\
&=&\rho_{D}(A)-r_{D_{min}}(A)-n_{D_{min}}(E)+n_{D_{min}}(A)+\\
&~&\rho_{D}(A^{c})-r_{D_{min}}(A^{c})-n_{D_{min}}(E)+n_{D_{min}}(A^{c})\\
&=&\rho_{D}(A)+\rho_{D}(A^{c})-|E|\\
&=&|E|-min\{|A\Delta F|: F\in \mathcal{F}\}+\\
&~&|E|-min\{|A^{c}\Delta F|: F\in \mathcal{F}\}-|E|\\
&=&|E|-min\{|A^{c}\Delta F|: F\in \mathcal{F}\}-min\{|A\Delta F|: F\in \mathcal{F}\}\\
&=&|E|-r((D*A^{c})_{min})-r((D*A)_{min})\\
&=&|E|-r((D*A)^{*}_{min})-r((D*A)_{min})\\
&=&r((D*A)_{max})-r((D*A)_{min})=w(D*A).
\end{eqnarray*}
\end{proof}

\begin{remark}
This is not right for non-normal delta-matroids. For example, let $D=(\{1 , 2\}, \{\{1\}, \{2\}\})$. It is easy to check that $D*1=(\{1 , 2\}, \{\emptyset, \{1, 2\}\})$, $D|_{1}=(\{1 \}, \{\{1\}\})$ and $D|_{2}=(\{2\}, \{\{2\}\})$. Obviously, $w(D*1)=2$ and $w(D|_1)=w(D|_2)=0$. Note that $w(D*1)\neq w(D|_{1})+w(D|_2)$.





\end{remark}

\begin{theorem}
Let $D=(E, \mathcal{F})$ be a binary normal delta-matroid. Then $^{\partial}w_{D}(z)$ contains non-zero constant term if and only if $G_{D}$ is a bipartite graph.
\end{theorem}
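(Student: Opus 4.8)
The plan is to read off the constant term of $^{\partial}w_{D}(z)$ combinatorially and then translate it into a colouring of $G_{D}$. Since $^{\partial}w_{D}(z)=\sum_{A\subseteq E}z^{w(D*A)}$, its constant term counts the subsets $A$ with $w(D*A)=0$, so the constant term is non-zero exactly when some twist $D*A$ is a matroid. As $D$ is normal, Lemma \ref{lem 5} gives $w(D*A)=w(D|_{A})+w(D|_{A^{c}})$, and because width is never negative this vanishes iff $w(D|_{A})=0$ and $w(D|_{A^{c}})=0$. Thus the first reduction is: $^{\partial}w_{D}(z)$ has a non-zero constant term iff there exists $A\subseteq E$ such that \emph{both} $D|_{A}$ and $D|_{A^{c}}$ are matroids, and it remains to understand when a restriction $D|_{A}$ is a matroid.

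The second step is to identify $D|_{A}$ with the binary delta-matroid of a principal submatrix. Writing $C=A(G_{D})$, so that $D=D(C)$, I would record that $D|_{A}$ is again a normal binary delta-matroid whose intersection graph is the induced subgraph $G_{D}[A]$, equivalently $D|_{A}=D(C[A])$. The point is that a normal delta-matroid has \emph{no coloops} (a coloop lies in every feasible set, yet $\emptyset$ is feasible), so each deletion merely discards the feasible sets meeting the deleted element; tracking the size-$\le 2$ feasible sets that determine $C$ through these deletions shows that the defining matrix of $D|_{A}$ is precisely $C[A]$.

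The heart of the argument is the sub-lemma that a normal binary delta-matroid $D(M)$ is a matroid iff $M=0$. Since $\emptyset$ is always feasible, $D(M)$ is a matroid iff $\mathcal{F}(D(M))=\{\emptyset\}$. If $M\neq 0$ then either some diagonal entry $M_{vv}=1$, making $\{v\}$ feasible, or every diagonal entry vanishes but some $M_{uv}=1$, in which case $M[\{u,v\}]=\left(\begin{smallmatrix}0&1\\1&0\end{smallmatrix}\right)$ is non-singular and $\{u,v\}$ is feasible; either way a non-empty feasible set appears and $w(D(M))\ge 1$. Applying this to $M=C[A]$ shows $D|_{A}$ is a matroid iff $C[A]=0$, i.e.\ iff $A$ carries no loop and spans no edge of $G_{D}$, that is, iff $A$ is an independent set of $G_{D}$. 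Combining with the first step, $w(D*A)=0$ iff both $A$ and $A^{c}$ are independent sets, which is exactly a proper $2$-colouring of $G_{D}$; such an $A$ exists iff $G_{D}$ is bipartite (a loop forbids any independent set through its vertex, matching that a looped graph is never bipartite). This yields the theorem.

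The step I expect to be the main obstacle is the second one: rigorously establishing $D|_{A}=D(C[A])$ and $G_{D|_{A}}=G_{D}[A]$. Everything else is immediate from Lemma \ref{lem 5} together with a one-line rank computation, but the compatibility of restriction with passage to principal submatrices—and hence to induced subgraphs—is where the binary structure must be used with care, in particular the observation that normality excludes coloops so that deletion acts transparently on the feasible sets.
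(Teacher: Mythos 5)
Your proof is correct and follows essentially the same route as the paper: both arguments rest on Lemma~\ref{lem 5} together with the correspondence between edges of $G_{D}$ and feasible sets of size at most two in restrictions, and the converse directions are identical. The only difference is packaging --- you establish the exact equivalence that $w(D*A)=0$ if and only if $\{A,A^{c}\}$ is a proper $2$-colouring of $G_{D}$ (via $D|_{A}=D(C[A])$ and the sub-lemma that $D(M)$ is a matroid iff $M=0$), whereas the paper argues the forward direction by contradiction through an odd cycle and monotonicity of width under deletion; the step you flag as the main obstacle is in fact immediate, since normality excludes coloops and hence $\mathcal{F}(D|_{A})=\{F\in\mathcal{F}:F\subseteq A\}$.
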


\begin{proof}
Since $^{\partial}w_{D}(z)$ contains non-zero constant term, it follows that $D$ is a twist of a matroid. On account of the property that twist preserving evenness, we have that $D$ is even and hence
$G_{D}$ is a simple graph. Suppose that $G_{D}$ is not bipartite. Then $G_{D}$ contains an odd cycle $P$  of length more than or equal to 3. We denote by $A$ the subset of $E$ corresponding to the vertices of $P$. It is obvious that deleting can not increase the width. Then for any subset $B$ of $E$, we have $w(D|_{B\cap A})\leq w(D|_{B})$ and $w(D|_{B^{c}\cap A})\leq w(D|_{B^{c}})$.
Since $D$ is a normal binary delta-matroid, we know that $D=D(C)$ for some symmetric matrix $C$ over $GF(2)$.
Note that there are $e, f\in B\cap A$ or $e, f\in B^{c}\cap A$ such that
\[C[\{e, f\}] =
\bordermatrix{
& e & f   \cr
e & 0   & 1     \cr
f  & 1   & 0     \cr
}.\]
Then $$w(D|_{B})+w(D|_{B^{c}})\geq w(D|_{B\cap A})+w(D|_{B^{c}\cap A})>0.$$ Thus  by Lemma \ref{lem 5} $w(D*B)=w(D|_{B})+w(D|_{B^{c}})>0,$ a contradiction.

Conversely, if $G_{D}$ is bipartite and non-trivial, then its vertex set can be partitioned into two subsets $X$ and $Y$ so that every edge of $G_{D}$ has one end in $X$ and the other end in $Y$.
For these two subsets $X$ and $Y$ of the vertex set of $G_{D}$, we denoted these two corresponding subset of $E$ also by $X$ and $Y$. Obviously, $X\cup Y=E$, $X\cap Y=\emptyset$ and $w(D|_{X})=w(D|_{Y})=0.$
Thus $w(D*X)=w(D|_{X})+w(D|_{Y})=0$ by Lemma \ref{lem 5}. Hence $^{\partial}w_{D}(z)$ contains non-zero constant term.
\end{proof}


\begin{theorem}\label{main-3}
Let $D=(E, \mathcal{F})$ be a connected even normal binary delta-matroid. Then $^{\partial}w_{D}(z)=mz^k$ if and only if $G_{D}$ is a complete graph of odd order.
\end{theorem}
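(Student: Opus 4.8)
The plan is to reduce the characterization $^{\partial}w_{D}(z)=mz^{k}$ (a single term) to a statement purely about the intersection graph $G_{D}$, using the width-splitting formula of Lemma~\ref{lem 5}. The backward direction is immediate from Proposition~\ref{pro 1}: if $G_{D}=K_{v}$ with $v$ odd, that proposition gives $^{\partial}w_{D}(z)=2^{v}z^{v-1}$, a single term. So the real content is the forward direction, and I would prove its contrapositive: if $D$ is connected, even, normal and binary but $G_{D}$ is \emph{not} a complete graph of odd order, then $^{\partial}w_{D}(z)$ has at least two distinct nonzero powers of $z$ appearing. Since $D$ is even, $G_{D}$ is a simple graph, and since $D$ is connected, $G_{D}$ is connected; the failure of ``complete of odd order'' splits into two cases to handle: either $G_{D}$ is not complete, or $G_{D}=K_{v}$ with $v$ even (the even-complete case already shows two terms via Proposition~\ref{pro 1}, so that subcase is free).

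The heart of the argument is the non-complete connected case. Here I would exhibit two subsets $A,B\subseteq E$ with $w(D*A)\neq w(D*B)$, both nonzero, forcing at least two nonconstant terms. By Lemma~\ref{lem 5}, for normal $D$ we have $w(D*A)=w(D|_{A})+w(D|_{A^{c}})$, so it suffices to control the widths of restrictions, and each restriction is again a normal binary delta-matroid whose intersection graph is the induced subgraph of $G_{D}$ on the corresponding vertices. Because $G_{D}$ is connected but not complete, there exist vertices $u,w$ that are non-adjacent yet joined by a path; in particular some induced subgraph is an induced $K_{2}$ (two adjacent vertices give width $\geq 1$ by the off-diagonal entry argument used in the bipartite theorem above) while a suitably chosen twist isolates vertices to force a different value. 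The key technical claim I would establish is a monotonicity/strict-inequality statement: for an edge $\{e,f\}$ with $C[\{e,f\}]=\bigl(\begin{smallmatrix}0&1\\1&0\end{smallmatrix}\bigr)$ one gets $w(D|_{\{e,f\}})=1$, whereas an independent set of vertices contributes width $0$, and by choosing $A$ to contain exactly one edge of $G_{D}$ and $A'$ to contain two vertex-disjoint edges one produces twists of different widths.

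The main obstacle will be pinning down \emph{exactly} which two twists give distinct nonzero widths in full generality, rather than merely producing two twists of possibly-equal width. The delicate point is that width is additive across the $A/A^{c}$ split but is controlled by the induced-subgraph structure in a way that depends on the global matrix $C$, not just on $G_{D}$; so I would lean on Theorem~\ref{main-2}, which says $^{\partial}w_{D}(z)$ depends only on $G_{D}$, to replace $D$ by the convenient bouquet-derived representative $D(A(G_{D}))$ and compute widths there. I expect the cleanest route is to show: if $G_{D}$ is connected and not complete, then there is a vertex subset $S$ whose induced subgraph and whose complement's induced subgraph are \emph{both} non-edgeless, giving $w(D*S)\geq 2$, while a single edge yields a twist of width exactly $1$; producing these two incomparable width values, together with the even-complete case from Proposition~\ref{pro 1}, completes the contrapositive and hence the theorem.
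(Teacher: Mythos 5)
Your backward direction and the reduction of the forward direction to the two cases (``$G_{D}$ not complete'' versus ``$G_{D}=K_{v}$ with $v$ even'') are fine, and the even-complete subcase is indeed disposed of by Proposition~\ref{pro 1}. But the core of the theorem --- showing that a connected, non-complete $G_{D}$ forces two distinct widths among the twists --- is exactly the part you leave as an ``obstacle,'' and the sketch you give of it does not close. Two concrete problems. First, the numerical anchor is wrong: for an adjacent pair $e,f$ in an even normal binary delta-matroid, $\mathcal{F}(D|_{\{e,f\}})=\{\emptyset,\{e,f\}\}$, so $w(D|_{\{e,f\}})=2$, not $1$; and in any case $w(D*\{e,f\})=2+w(D|_{E\setminus\{e,f\}})$, whose second summand you have no handle on. Second, and more fundamentally, the proposed comparison --- a set $S$ with both $G_{D}[S]$ and $G_{D}[S^{c}]$ non-edgeless (giving $w(D*S)\geq 2$, or $\geq 4$) versus a ``single edge'' --- cannot separate twists, because these are only lower bounds: already for $K_{5}$, which \emph{is} a one-term example, every such $S$ gives $w(D*S)=4$, equal to every other twist. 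To make Lemma~\ref{lem 5} do this job you would need exact values of $w(D|_{A})$ (e.g.\ the identity $w(D|_{A})=\operatorname{rank}_{GF(2)}C[A]$), which you neither state nor prove, and even with it the combinatorial argument that a connected non-complete graph admits $A,B$ with $\operatorname{rank}C[A]+\operatorname{rank}C[A^{c}]\neq\operatorname{rank}C[B]+\operatorname{rank}C[B^{c}]$ is still missing.

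The paper's proof goes through an entirely different mechanism, which you may want to compare against. From the hypothesis $^{\partial}w_{D}(z)=mz^{k}$ it extracts two local facts about the matroid ${D^{*}}_{min}$: if $\{e,f\}\in\mathcal{F}$ then $r_{{D^{*}}_{min}}(\{e,f\})=1$ (every maximum feasible set must meet $\{e,f\}$, else twisting by $\{e,f\}$ raises the width; and some maximum feasible set avoids $e$, else twisting by $e$ lowers it), while if $\{e,f\}\notin\mathcal{F}$ then $r_{{D^{*}}_{min}}(\{e,f\})=2$. Submodularity of the matroid rank function then forbids a triple with $\{e,f\},\{e,g\}\in\mathcal{F}$ and $\{f,g\}\notin\mathcal{F}$, i.e.\ an induced $2$-path in $G_{D}$, so a connected $G_{D}$ must be complete; oddness of the order then follows from Proposition~\ref{pro 1}. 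Note that Lemma~\ref{lem 5} plays no role there; the leverage comes from comparing single twists $D*e$ and $D*\{e,f\}$ directly with $D$ and from matroid submodularity, which is the idea your proposal is missing.
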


\begin{proof}
The sufficiency is easily verified by Proposition \ref{pro 1}. For necessity,
\[D=\left\{\begin{array}{ll}
(\{e\}, \{\emptyset\}), & \text{if $|E|=1$}\\

(\{e, f\}, \{\emptyset, \{e, f\}\}), & \text{if $|E|=2$}.
\end{array}\right.\]
Then the result is easily verified when $|E|\in \{1, 2\}$. Assume that $|E|\geq 3$. Let $e, f, g\in E$. We consider three claims:

{\bf Claim 1.} If $\{e, f\}\in \mathcal{F}$, we have $r_{{D^{*}}_{min}}(\{e, f\})=1$ and $r_{{D^{*}}_{min}}(e)=1$.

{\bf Proof of Claim 1.} For any $A\in \mathcal{F}_{max}$, we observe that $A\cap \{e, f\}\neq \emptyset$. Otherwise, $\emptyset, A\cup \{e, f\}\in \mathcal {F}(D*\{e, f\})$. Since $\{e, f\}\in \mathcal{F}$, it follows that $\emptyset\in \mathcal {F}(D*\{e, f\})$. Then $w(D*\{e, f\})> w(D)$, this contradicts $^{\partial}w_{D}(z)=mz^k$. Furthermore, we observe that there exists $B\in \mathcal{F}_{max}$ such that $e\notin B$. Otherwise, $r(D*e_{max})=r(D_{max})-1$. Since $e\notin \mathcal{F}$ and $\emptyset \in \mathcal{F}$ , we have $r(D*e_{min})=1$. Then $w(D*e)=w(D)-2$, this also contradicts $^{\partial}w_{D}(z)=mz^k$. Consequently, for any $A\in {\mathcal{F}^{*}}_{min}$, $A\cap \{e, f\}\neq \{e, f\}$ and there exists $B\in {\mathcal{F}^{*}}_{min}$ such that $e\in B$. Thus $r_{{D^{*}}_{min}}(\{e, f\})=1$ and $r_{{D^{*}}_{min}}(e)=1$.

{\bf Claim 2.} If $\{e, f\}\notin \mathcal{F}$, we have $r_{{D^{*}}_{min}}(\{e, f\})=2$.

{\bf Proof of Claim 2.} There exists $A\in \mathcal{F}_{max}$ such that $\{e, f\}\cap A=\emptyset$. Otherwise, $r(D*\{e, f\}_{max})\leq r(D_{max})$. Since $\{e, f\}\notin \mathcal{F}$ and $\emptyset \in \mathcal{F}$ , we have $r(D*\{e, f\}_{min})=2$. Then  $w(D*\{e, f\})\leq w(D)-2$, a contradiction.
Consequently, there exists $A\in {\mathcal{F}^{*}}_{min}$ such that $\{e, f\}\in A$. Thus $r_{{D^{*}}_{min}}(\{e, f\})=2$.

{\bf Claim 3.} $E$ does not contain $e, f, g$ such that $\{e, f\}, \{e, g\}\in \mathcal {F}$ and $\{f, g\}\notin \mathcal{F}$.

{\bf Proof of Claim 3.} Assume that Claim 3 is not true. Since $\{e, f\}, \{e, g\}\in \mathcal {F}$, it follows that $r_{{D^{*}}_{min}}(\{e, f\})=1$ and $r_{{D^{*}}_{min}}(\{e, g\})=1$ by Claim 1. Then
\begin{eqnarray*}
&~&r_{{D^{*}}_{min}}(\{e, f\}\cup\{e, g\})+r_{{D^{*}}_{min}}(\{e, f\}\cap\{e, g\})\\
&=&r_{{D^{*}}_{min}}(\{e, f, g\})+r_{{D^{*}}_{min}}(e)\\
&\leq&r_{{D^{*}}_{min}}(\{e, f\})+r_{{D^{*}}_{min}}(\{e, g\})\\
&=&2.
\end{eqnarray*}
Hence $r_{{D^{*}}_{min}}(\{e, f, g\})\leq 1$. Since $\{f, g\}\notin \mathcal{F}$, we have $r_{{D^{*}}_{min}}(\{f, g\})=2$ by Claim 2. But $$r_{{D^{*}}_{min}}(\{f, g\})\leq r_{{D^{*}}_{min}}(\{e, f, g\})\leq1,$$ a contradiction.

Suppose that $G_{D}$ is not a complete graph. Note that $G_{D}$ is connected. Then there is a vertex set $v_{e}, v_{f}, v_{g}$ of $G_{D}$ such that the induced subgraph $G_{D}(\{v_{e}, v_{f}, v_{g}\})$ is a 2-path. We may assume without loss of generality that the degree of $v_{e}$ is 2 in $G_{D}(\{v_{e}, v_{f}, v_{g}\})$. Since $D$ is a normal binary delta-matroid, we know that $D=D(C)$ for some symmetric matrix $C$ over $GF(2)$.
Then
   \[C[\{e, f, g\}] =
\bordermatrix{
& e & f & g  \cr
e & 0   & 1   & 1  \cr
f  & 1   & 0   & 0  \cr
g & 1   & 0   & 0  \cr
}.\]
Thus $\{e, f\}, \{e, g\}\in \mathcal {F}$ and $\{f, g\}\notin \mathcal{F}$, this contradicts Claim 3. Therefore $G_{D}$ is a complete graph.
\end{proof}

\begin{corollary}
Let $D=(E, \mathcal{F})$ be an even normal binary delta-matroid. Then $^{\partial}w_{D}(z)=mz^k$ if and only if each connected component of $G_{D}$ is a complete graph of odd order.
\end{corollary}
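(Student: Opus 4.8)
The plan is to reduce to the connected case already settled in Theorem \ref{main-3} by decomposing $D$ into its connected components and exploiting the multiplicativity of the twist polynomial over direct sums recorded in Proposition \ref{pro 2}(3).

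First I would write $D=D_{1}\oplus D_{2}\oplus\cdots\oplus D_{n}$ as a direct sum of connected delta-matroids. Since $D$ is connected if and only if $G_{D}$ is connected, the intersection graphs $G_{D_{1}},\ldots,G_{D_{n}}$ are precisely the connected components of $G_{D}$. I would then check that each summand $D_{i}$ inherits the hypotheses of Theorem \ref{main-3}: normality of $D$ forces $\emptyset\in\mathcal{F}(D_{i})$ for every $i$, and since $D$ is even its intersection graph $G_{D}$ is simple, so each induced subgraph $G_{D_{i}}$ is simple and hence each $D_{i}$ is even; binarity passes to each summand because the defining symmetric matrix is block diagonal, the $i$-th block being $A(G_{D_{i}})$. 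Thus each $D_{i}$ is a connected even normal binary delta-matroid to which Theorem \ref{main-3} applies, and the statement ``each connected component of $G_{D}$ is a complete graph of odd order'' is exactly ``each $G_{D_{i}}$ is a complete graph of odd order.''

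By Proposition \ref{pro 2}(3) we have ${}^{\partial}w_{D}(z)=\prod_{i=1}^{n}{}^{\partial}w_{D_{i}}(z)$. For the sufficiency direction, if each $G_{D_{i}}$ is a complete graph of odd order, then Theorem \ref{main-3} gives ${}^{\partial}w_{D_{i}}(z)=m_{i}z^{k_{i}}$, a monomial; a product of monomials is a monomial, so ${}^{\partial}w_{D}(z)=mz^{k}$ with $m=\prod_{i}m_{i}$ and $k=\sum_{i}k_{i}$.

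The substance of the necessity direction is the elementary observation that a product of nonzero polynomials with nonnegative coefficients is a monomial only if each factor is a monomial. Each ${}^{\partial}w_{D_{i}}(z)$ has nonnegative integer coefficients and satisfies ${}^{\partial}w_{D_{i}}(1)=2^{|E(D_{i})|}>0$ by Proposition \ref{pro 2}(1), so it is a nonzero polynomial whose lowest- and highest-degree terms both have positive coefficients. Because there is no cancellation among these extreme terms, the minimum (respectively maximum) degree of the product equals the sum of the minimum (respectively maximum) degrees of the factors. If ${}^{\partial}w_{D}(z)=mz^{k}$ is a monomial these two sums coincide, which forces the minimum and maximum degrees of each factor to be equal; hence every ${}^{\partial}w_{D_{i}}(z)$ is a monomial, and Theorem \ref{main-3} then yields that each $G_{D_{i}}$ is a complete graph of odd order. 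I expect this degree-bookkeeping step to be the only point requiring care, since everything else is a direct appeal to the connected case together with the direct-sum factorization.
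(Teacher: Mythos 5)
Your proposal is correct and takes essentially the same route as the paper, whose own proof is a one-line appeal to Proposition \ref{pro 2} and Theorem \ref{main-3}; you simply supply the details (component decomposition, multiplicativity over direct sums, and the observation that a product of polynomials with nonnegative coefficients is a monomial only if every factor is). No gaps.
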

\begin{proof}
The proof is straightforward by Proposition \ref{pro 2} and Theorem \ref{main-3}.
\end{proof}


\section*{Acknowledgements}
This work is supported by NSFC (No. 11671336) and the Fundamental Research Funds for the Central Universities (Nos. 20720190062, 2021QN1037).


\end{document}